\theoremstyle{nonumberplain}
\newtheorem{proof}{Proof}
\newtheorem{theorem}{Theorem}[section]
\newtheorem{lemma}{Lemma}[section]
\numberwithin{equation}{section} 
\begin{document}	
	\title{\bf Asymptotic Normality of the Largest Eigenvalue for Noncentral Sample Covariance Matrices}
	\author{
		Huihui Cheng, 
		Minjie Song		
		\thanks{E-mail addresses: chenghuihui@ncwu.edu.cn (H.H.Cheng), songminjie0808@163.com (M.J. Song).}
		\medskip
		\\
		\textsuperscript{}School of Mathematics and Statistics,\\ North China University of Water Resources and Electric Power, Zhengzhou, China. 	}
\maketitle  
{\noindent \small{\bf Abstract:} Let $X$ be a $p\times n$ independent identically distributed real Gaussian matrix with positive mean $\mu $ and variance $\sigma^2$ entries. The goal of this paper is to investigate the largest eigenvalue of the noncentral sample covariance matrix $ W=XX^{T}/n$, when the dimension $p$ and the sample size $n$ both grow to infinity with the limit $p/n=c\,(0<c< \infty)$. Utilizing the von Mises iteration method, we derive an approximation of the largest eigenvalue $\lambda_{1}(W)$ and show that $\lambda_{1}(W)$ asymptotically has a normal distribution with expectation $p\mu ^2+(1+c)\sigma^2$ and variance $4c\mu^2\sigma^2$. 
	
\noindent{\textbf{Keywords:} Sample covariance matrices, Largest eigenvalue, Normal distribution}	
}  
\maketitle
\section{Introdution}

Suppose that $\left\{X_{i j},1\leq i\leq p,1\leq j\leq n \right\} $ are independent and identical distributed real random variables with mean $\mathbb{E}\, (X_{1 1})=\mu$ and a finite positive variance $ \mathrm{Var}\,(X_{1 1}) =\sigma ^2 $. Let $ W=(W_{i j})_{1\leq i,j\leq p}=XX^{T}/n$ be the sample covariance matrix, where $X=(X_{i j})_{1\leq i\leq p,1\leq j\leq n}$. 

The eigenvalues of a $p\times p$ real matrix $A $ are the roots in $\mathbb{C}$ of its characteristic polynomial. We label them as $\lambda_{1}(A),\cdots , \lambda_{p}(A)$ in such a way that $|\lambda_{1}(A)|\geq \cdots \geq |\lambda_{p}(A)|$ with increasing phases. The singular values of $A$ are $s_k(A)=\sqrt{\lambda_{k}({A^TA})} (1\leq k \leq p)$, where $A^{T}$ denotes the transpose. Since the matrix $W$ is symmetric positive semidefinite, we have $\lambda_{1}(W)\geq\\cdots\geq\lambda_{p}(W)\geq0$. For simplicity, we use the abbreviations a.s. for almost surely, i.i.d. for independent and identical distributed, and the notation $n>>1$ indicates that $n$ is sufficientlly large.

Let $\mu_W=\frac{1}{p}\sum_{j=1}^{p} \delta_{\lambda_j(W)}$ be the empirical spectral distribution of $W$. The Mar\v{c}henko-Pastur theorem (see Chapter 3 of \cite{bai10}) states that  
\begin{equation*}
	\mu_W\xrightarrow[n\rightarrow \infty]{\mathcal{C}_b} \mathcal{F}_{MP},\quad \text{a.s.}
\end{equation*}
where “$\xrightarrow{\mathcal{C}_b}$” denotes the weak convergence of probability measures with respect to bounded continuous functions, and $\mathcal{F}_{MP}$ is the Mar\v{c}henko-Pastur law (M-P law) with  density function
\begin{equation*}
	f_{MP}(x) = 
	\begin{cases} 
		\dfrac{1}{2\pi xc\sigma^2}\sqrt{
			(b-x)(x-a)}, &\text{if } a\leq x \leq b, \\
		0, &\text{otherwise},
	\end{cases}	
\end{equation*}
and has a point mass $1-1/c$ at $x=0$ if $c>1$, where $a=\sigma^2(1-\sqrt{c})^2$ and $b=\sigma^2(1+\sqrt{c})^2$. In the case that $\sigma^2=1$, the M-P law is called the standard M-P law. It has the $k$-th moment $m_{k}=(k+1)^{-1}C_{2k}^{k} (k=0,1,2,\dots)$, which is a special case of Fuss–Catalan numbers (see \cite{forre15}). Bai, Krishnaiah, Silverstein and Yin in \cite{bai88,yin88} proved  that the limit for $\lambda_{1}(W)$ exists if and only if the underlying distribution has zero mean and finite fourth moment, that is, 
\begin{equation}
	\lim\limits_{n\to\infty}\lambda_{1}\left( W \right)=\sigma^2(1+\sqrt{c})^2\;\;\text{a.s.} \quad\;\text{if and only if}\;\;\;\mu=0\;\; \text{and} \;\;\mathbb{E}(|X_{11}|^4)\textless \infty.\label{1.1}
\end{equation}
Johnstone \cite{john01} found the limiting distribution of the largest eigenvalue of the Wishart matrix. As stated in \cite{Mer19}, if $X_{1 1}$ is a standard normal random variable then we have $2^{-4/3}n^{2/3}\left(\lambda_{1}\left(W \right)-4\right)$ converges in distribution to Tracy-Widom distribution, which is introduced in \cite{Tracy96} to describe the fluctuations of the largest eigenvalues of the Gaussian Orthogonal Ensemble random matrices. And, Johnstone \cite{john01} initially proposed the spiked covariance model with relatively small spikes. Afterward some impressive studies are dedicated to investigating the limiting properties of the spiked eigenvalues with some assumptions, such as those by Baik, Ben Arous and Péché \cite{baik04}, Baik and Silverstein \cite{baik06}, Paul \cite{paul07}, Bai and Yao \cite{bai08}. Furthermore, Loubaton and Vallet \cite{Loubaton11} studied the almost sure location of the eigenvalues of matrix $D_{n}D_{n}^*$, where $D_n=B_n +\sigma X_n$, $B_n$ is a uniformly bounded
deterministic matrix such that $\sup_n\Vert B_n\Vert <\infty$ and $X_n$ is an independent identically distributed complex Gaussian matrix with zero mean and variance $1/n$ entries.

In this paper, we are interested in the asymptotic behavior of the largest eigenvalue $\lambda_{1}(W)$ for the general case of $\mu>0$. By denoting $\overline{X}=X-\mathbb{E}X$, we obtain $X/\sqrt{n}=\mathbb{E}X/\sqrt{n}+\overline{X}/\sqrt{n}$. This suggests that the largest singular value of $\mathbb{E}X/\sqrt{n}$ is $\sqrt{p}\mu$, which is distinct from the condition of uniform boundedness in \cite{Loubaton11}. Our primary result is as follows.
\begin{theorem}
	Let $\left\{X_{i j}, 1\leq i\leq p,1\leq j\leq n \right\}$ be i.i.d. real normal random variables with mean $\mu>0 $ and variance $\sigma^2$, and the noncentral sample covariance matrix $W=XX^{T}/n$. Then the distribution of $\lambda_{1}(W)$ asymptotically has a normal distribution with expectation $p\mu^2+(1+c)\sigma^2$ and variance $4c\mu^2\sigma^2$ of order $\sqrt{p}/n$. 
\end{theorem}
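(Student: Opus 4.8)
The plan is to convert the positive mean into a single spiked eigenvalue whose enormous separation from the bulk makes the von Mises iteration (power method) converge in one step. Write $X=\mu\,\mathbf{1}_p\mathbf{1}_n^{T}+Y$, where $\mathbf{1}_m\in\mathbb{R}^m$ is the all-ones vector and $Y=X-\mathbb{E}X$ has i.i.d.\ $N(0,\sigma^2)$ entries; set $u:=\mathbf{1}_p/\sqrt p$ and $v:=Y\mathbf{1}_n$, a vector with i.i.d.\ $N(0,n\sigma^2)$ coordinates. Expanding $W=XX^{T}/n$ gives
\[
 W \;=\; p\mu^2\,uu^{T}\;+\;\underbrace{\tfrac{\mu\sqrt p}{n}\bigl(uv^{T}+vu^{T}\bigr)}_{=:\,E_{\mathrm{c}}}\;+\;\underbrace{\tfrac1n YY^{T}}_{=:\,E_{\mathrm{n}}}\;=:\;p\mu^2\,uu^{T}+E .
\]
I would first record the orders: by \eqref{1.1} applied to $Y$, $\|E_{\mathrm{n}}\|=\lambda_1(\tfrac1nYY^{T})\to\sigma^2(1+\sqrt c)^2$ a.s., while $\|v\|^2=pn\sigma^2\bigl(1+O_P(p^{-1/2})\bigr)$ gives $\|E_{\mathrm{c}}\|\le\tfrac{2\mu\sqrt p}{n}\|v\|=O_P(\sqrt p)$. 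Hence $\|E\|=O_P(\sqrt p)=o_P(p\mu^2)$, so Weyl's inequality yields $|\lambda_1(W)-p\mu^2|\le\|E\|=O_P(\sqrt p)$ and $\lambda_2(W)\le\lambda_2(p\mu^2uu^{T})+\|E\|=O_P(\sqrt p)$; thus $\lambda_1(W)$ is isolated by a spectral gap of order $p\mu^2$, and $\lambda_2(W)/\lambda_1(W)=O_P(p^{-1/2})$.

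Next I would run one step of the iteration from $u_0:=u$. Let $\phi_1$ be a unit leading eigenvector. The Davis--Kahan $\sin\theta$ inequality applied to $\|Wu_0-p\mu^2u_0\|=\|Eu_0\|=O_P(\sqrt p)$ and to the gap gives $\sin\angle(u_0,\phi_1)=O_P(p^{-1/2})$; since one multiplication by $W$ contracts this angle by the factor $\lambda_2(W)/\lambda_1(W)$, the normalized iterate $u_1:=Wu_0/\|Wu_0\|$ obeys $\sin\angle(u_1,\phi_1)=O_P(p^{-1})$, so that $0\le\lambda_1(W)-u_1^{T}Wu_1\le\lambda_1(W)\sin^2\angle(u_1,\phi_1)=O_P(p^{-1})$. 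It therefore suffices to evaluate the Rayleigh quotient $u_1^{T}Wu_1$ up to $o_P(\sqrt p/n)$. Writing $y:=Eu_0$, substituting $Wu_0=p\mu^2u_0+y$ into $u_1^{T}Wu_1=(p\mu^2u_0+y)^{T}W(p\mu^2u_0+y)\big/\|p\mu^2u_0+y\|^2$ and collecting by order — using $u_0^{T}y=u_0^{T}Eu_0=O_P(1)$, $\|y\|^2=O_P(p)$ and $y^{T}Wy=p\mu^2(u_0^{T}y)^2+y^{T}Ey=O_P(p^{3/2})$ — all but two contributions turn out to be $O_P(\sqrt p/n)$, leaving
\[
 u_1^{T}Wu_1 \;=\; p\mu^2 \;+\; u_0^{T}Eu_0 \;+\; \frac{\|Eu_0\|^2}{p\mu^2} \;+\; O_P\!\bigl(\sqrt p/n\bigr).
\]

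It remains to identify the two surviving terms. The linear one splits as $u_0^{T}Eu_0=u_0^{T}E_{\mathrm{c}}u_0+u_0^{T}E_{\mathrm{n}}u_0=\tfrac{2\mu}{n}\mathbf{1}_p^{T}v+\tfrac1{np}\|Y^{T}\mathbf{1}_p\|^2$; the first summand equals $\tfrac{2\mu}{n}\sum_{i,j}Y_{ij}$, which is \emph{exactly} $N(0,4c\mu^2\sigma^2)$ — call it $Z_n$ — while the second, a normalized $\chi^2_n$-sum, equals $\sigma^2+O_P(n^{-1/2})$. For the quadratic term, $\|Eu_0\|^2=\|E_{\mathrm{c}}u_0\|^2+O_P(\sqrt p)=\tfrac{\mu^2p}{n^2}\|v\|^2\bigl(1+O_P(p^{-1})\bigr)+O_P(\sqrt p)=c\mu^2\sigma^2p+O_P(\sqrt p)$, so $\|Eu_0\|^2/(p\mu^2)=c\sigma^2+O_P(\sqrt p/n)$. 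Combining with the previous display and the bound on $\lambda_1(W)-u_1^{T}Wu_1$,
\[
 \lambda_1(W)\;=\;p\mu^2+(1+c)\sigma^2+Z_n+O_P\!\bigl(\sqrt p/n\bigr),\qquad Z_n\sim N\!\bigl(0,\,4c\mu^2\sigma^2\bigr),
\]
and since the remainder is $o_P(1)$, Slutsky's theorem delivers the asymptotic normality claimed, with expectation $p\mu^2+(1+c)\sigma^2$, variance $4c\mu^2\sigma^2$, and fluctuations controlled to order $\sqrt p/n$.

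The step I expect to be the main obstacle is the bookkeeping behind the last two displays: one must verify that exactly the two $O(1)$ terms above persist — the $\sigma^2$ from $u_0^{T}E_{\mathrm{n}}u_0$ and the $c\sigma^2$ from the second-order term $\|E_{\mathrm{c}}u_0\|^2/(p\mu^2)$ — while every cross-term among the rank-one spike, the rank-two part $E_{\mathrm{c}}$ and the full-rank noise $E_{\mathrm{n}}$, and every third- and higher-order perturbative correction, is genuinely $O_P(\sqrt p/n)$ or smaller. This rests on sharp control of $\|v\|^2$, $\|Y^{T}\mathbf{1}_p\|^2$ and of $\|\tfrac1nYY^{T}\|=O_P(1)$ — precisely where the finite fourth moment (here Gaussianity) of the entries is used.
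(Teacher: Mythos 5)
Your proposal is correct and is, at its core, the same von Mises iteration from the all-ones vector that the paper uses, but you implement it in a cleaner, more structural way. The paper works directly with the row sums $W_i$ and the two statistics $\lambda_1^{(1)}=\mathbf{1}^{T}W\mathbf{1}/p$ and $\lambda_1^{(2)}=\mathbf{1}^{T}W^{2}\mathbf{1}/\mathbf{1}^{T}W\mathbf{1}$, proving (\ref{2.2}), (\ref{2.3}) and (\ref{3.14}) by bare-hands second-moment computations and Chebyshev tails, and never isolates a spike decomposition. You instead write $W=p\mu^{2}uu^{T}+E_{\mathrm{c}}+E_{\mathrm{n}}$ at the outset, which makes the size of the gap and the orders of all error terms transparent, and you estimate $\lambda_1$ by the Rayleigh quotient of the first iterate, $u_1^{T}Wu_1=\mathbf{1}^{T}W^{3}\mathbf{1}/\mathbf{1}^{T}W^{2}\mathbf{1}$ — one power step further than the paper's $\lambda_1^{(2)}$ — and you control the eigenvector error via Davis–Kahan plus the one-step angle contraction by $\lambda_2/\lambda_1=O_P(p^{-1/2})$, rather than bounding $\|\mathbf{r}\|$ directly as the paper does in (\ref{3.8}). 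The three quantities you identify — $Z_n=(2\mu/n)\sum_{i,j}Y_{ij}$ giving the $N(0,4c\mu^{2}\sigma^{2})$ fluctuation, $u_0^{T}E_{\mathrm{n}}u_0\to\sigma^{2}$, and $\|E_{\mathrm{c}}u_0\|^{2}/(p\mu^{2})\to c\sigma^{2}$ — are exactly what the paper's $\lambda_1^{(1)}$ and $\lambda_1^{(2)}-\lambda_1^{(1)}$ encode, so the two routes lead to the same answer. What the paper's route buys is explicit quantitative probability bounds with constants; what yours buys is conceptual clarity and a shorter path to the three surviving terms. The bookkeeping you flag as the main obstacle — confirming that $y^{T}Ey$ and the $E_{\mathrm{c}}$–$E_{\mathrm{n}}$ cross terms do not inject a deterministic shift at the target precision — does indeed go through: one checks that $\mathbb{E}[y^{T}Ey]/(p^{2}\mu^{4})=O(1/p)$, which is consistent with the paper's (\ref{2.4}) and puts these terms safely below $\sqrt p/n$. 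One small stylistic caution: just as the paper does, you should state the limiting shift as $p\sigma^{2}/n$ rather than $c\sigma^{2}$ inside the $O_P(\sqrt p/n)$ bookkeeping, since $|p/n-c|$ is not assumed to be smaller than $\sqrt p/n$; the substitution $c\sigma^{2}$ is only valid at the final weak-convergence step.
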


We observe that the largest eigenvalue $\lambda_{1}(\mathbb{E}W)$ is equal to $p\mu^2+\sigma^2$, while the other eigenvalues all equal to $\sigma^2$, and the vector  $\mathbf{1}=(1,1,\dots,1)^T$ is an eigenvector corresponding to $\lambda_{1}(\mathbb{E}W)$. Therefore, the vector $\mathbf{1}$ is “nearly” an eigenvector corresponding to $\lambda_{1}(W)$. Based on this, we use the von Mises iteration method \cite{furedi81} to approximate $\lambda_{1}(W)$ by approximating $\lambda_{1}^{(1)}= \sum_{i=1}^{p} W_i /p$, and  $\lambda_{1}^{(2)}= \dfrac{\sum_{i=1}^{p} W_i^{2}}{\sum_{i=1}^{p} W_i}$ in two steps, where  $W_i$ denotes the sum of the elements in the $i$-th row of $W$. The study in \cite{furedi81}  focused on the asymptotic behavior of the largest eigenvalue of a symmetric random matrix with bounded independent and identically distributed entries. Unlike the study of it, the entries of $W$ in our analysis are not independent and the mean depend on $p$. First, the error of $\lambda_{1}^{(2)}$ is of order $1/p$, that is, there exists a constant $C_1 $ such that when $n>>1$,
\begin{equation*}
	\mathbb{P} \left(\left|\lambda_{1}^{(2)}-\lambda_{1}(W)\right|>\dfrac{x}{p} \right)<\dfrac{C_1}{x^2}+O\left(\dfrac{1}{p}\right).
\end{equation*}
But, it is not easy to obtain the distribution of $\lambda_{1}^{(2)}$. Observe that  $\lambda_{1}^{(1)}$ asymptotically has a normal distribution. By using the relationship between $\lambda_{1}^{(2)}$ and  $\lambda_{1}^{(1)}$, we approximate $\lambda_{1}(W)$ with $\lambda_{1}^{(1)}+p\sigma^2/n$ of order $\sqrt{p}/n$, that is, when $n>>1$,
\begin{equation*}
		\mathbb{P} \left(\left\vert \lambda_{1}^{(1)}-\lambda_{1}(W)+\dfrac{p\sigma^2}{n}\right\vert>\dfrac{C_2 \sqrt{p}x}{n} \right)<\dfrac{C_3}{x^2}+O\left(\dfrac{1}{p}\right),
\end{equation*}
where  $C_2, C_3 $ are constants. 

In the course of the proof, we find that if $\mu>0$, when $n\to\infty$, $\lambda_{1}(W)$ is deviated from domain $[\sigma^2(1-\sqrt{c})^2,\sigma^2(1+\sqrt{c})^2]$ with high probability, while the second eigenvalue $\lambda_{2}(W)\leq \sigma^2(1+\sqrt{c})^2$, a.s.. 
Although the limit empirical spectral distribution of $W$ is independent of $\mu$, Theorem 1.1 shows that the distribution of $\lambda_{1}(W)$ in the case $\mu>0$ is differernt from the Tracy-Widom distribution in $\mu=0$.
\section{Preliminary results}

To prove the main theorem, we need some preliminary results.
\begin{lemma} Under the assumption of Theorem 1.1, $\lambda_{1}^{(1)}$ asymptotically has a normal distribution, and 
	\begin{equation}
		\mathbb{E}(\lambda_{1}^{(1)})=p\mu^2+\sigma ^2,\quad\mathrm{Var}(\lambda_{1}^{(1)})=4\mu^2\sigma^2\dfrac{p}{n}+O\left(\dfrac{1}{n}\right).\label{2.1}
	\end{equation}    
\end{lemma}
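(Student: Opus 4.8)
The quantity $\lambda_1^{(1)} = \tfrac1p\sum_{i=1}^p W_i$ where $W_i$ is the $i$-th row sum of $W = XX^T/n$. The plan is to write this explicitly in terms of the entries of $X$ and recognize it as a quadratic form in independent Gaussians, then apply a central limit theorem. Writing $\mathbf 1$ for the all-ones vector in $\mathbb R^p$, we have $\sum_{i=1}^p W_i = \mathbf 1^T W \mathbf 1 = \tfrac1n \mathbf 1^T X X^T \mathbf 1 = \tfrac1n \|X^T \mathbf 1\|^2 = \tfrac1n \sum_{j=1}^n \bigl(\sum_{i=1}^p X_{ij}\bigr)^2$. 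So $\lambda_1^{(1)} = \tfrac{1}{pn}\sum_{j=1}^n S_j^2$ with $S_j = \sum_{i=1}^p X_{ij}$. The first step is to observe that the $S_j$ ($1 \le j \le n$) are i.i.d.\ normal with mean $p\mu$ and variance $p\sigma^2$, since the columns of $X$ are independent and each has i.i.d.\ normal entries.

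Given this, $\lambda_1^{(1)}$ is (up to the factor $\tfrac{1}{pn}$) a sum of $n$ i.i.d.\ copies of $S_1^2$. I would next compute the first two moments of $S_1^2$: with $S_1 \sim \mathcal N(p\mu, p\sigma^2)$, we get $\mathbb E(S_1^2) = p^2\mu^2 + p\sigma^2$, hence $\mathbb E(\lambda_1^{(1)}) = \tfrac{1}{pn}\cdot n(p^2\mu^2+p\sigma^2) = p\mu^2 + \sigma^2$, matching the claim exactly. For the variance, $\mathrm{Var}(S_1^2) = \mathbb E(S_1^4) - (\mathbb E S_1^2)^2$; using the fourth moment of a Gaussian ($\mathbb E(Z^4) = \mu_Z^4 + 6\mu_Z^2\sigma_Z^2 + 3\sigma_Z^4$ with $\mu_Z = p\mu$, $\sigma_Z^2 = p\sigma^2$), one finds $\mathrm{Var}(S_1^2) = 4p^3\mu^2\sigma^2 + 2p^2\sigma^4$. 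Therefore $\mathrm{Var}(\lambda_1^{(1)}) = \tfrac{1}{p^2n^2}\cdot n(4p^3\mu^2\sigma^2+2p^2\sigma^4) = \tfrac{4p\mu^2\sigma^2}{n} + \tfrac{2\sigma^4}{n}$, which is $4\mu^2\sigma^2\tfrac pn + O(1/n)$ as stated.

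For asymptotic normality, since $\lambda_1^{(1)} = \tfrac1n\sum_{j=1}^n Y_j$ with $Y_j = S_j^2/p$ i.i.d.\ and having finite variance (indeed all moments finite, being polynomials in a Gaussian), the classical Lindeberg--Lévy CLT applies for each fixed $p$: $\sqrt n\,(\lambda_1^{(1)} - \mathbb E\lambda_1^{(1)})/\sqrt{\mathrm{Var}(Y_1)}$ converges to $\mathcal N(0,1)$ as $n\to\infty$. One should be slightly careful that $p = p(n)$ grows, so strictly one wants a triangular-array CLT (Lyapunov's condition): the normalized summands $(Y_j - \mathbb EY_j)/\sqrt{n\,\mathrm{Var}(Y_1)}$ need a $(2+\delta)$-moment bound, which follows because $\mathrm{Var}(Y_1) \asymp p$ while $\mathbb E|Y_1 - \mathbb EY_1|^{2+\delta} \asymp p^{2+\delta}$, so the Lyapunov ratio is $O(n \cdot p^{2+\delta} / (n\,p)^{1+\delta/2}) = O(p^{1+\delta/2}/n^{\delta/2})$; this tends to $0$ provided $n$ grows fast enough relative to $p$ — and since $p/n \to c < \infty$, we have $p = O(n)$, giving a ratio $O(n^{1-\delta/2}\cdot n^{\delta/2}/n^{\delta/2}) \to 0$ for $\delta > 2$, e.g.\ $\delta = 4$ works.

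The main obstacle is really just this last point — making the CLT uniform as $p,n\to\infty$ together rather than invoking it for fixed $p$ — and being careful about what "asymptotically has a normal distribution" means here (convergence after centering and scaling by the stated variance). The moment computations themselves are routine Gaussian algebra; the representation $\sum_i W_i = \tfrac1n\|X^T\mathbf 1\|^2$ is the one genuinely useful observation, and it reduces everything to a sum of i.i.d.\ squared Gaussians.
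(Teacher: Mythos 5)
Your proposal takes essentially the same route as the paper: you rewrite $\lambda_1^{(1)} = \tfrac{1}{pn}\sum_{j=1}^n S_j^2$ with $S_j = \sum_i X_{ij}$ i.i.d.\ Gaussian (the paper calls these $X_{\cdot j}$), compute the moments of $S_1^2$ directly, and then invoke a CLT. Your moment computations $\mathbb{E}(S_1^2) = p^2\mu^2 + p\sigma^2$ and $\mathrm{Var}(S_1^2) = 4p^3\mu^2\sigma^2 + 2p^2\sigma^4$ are correct and match the paper. Where the paper disposes of asymptotic normality with the one-line remark ``by calculating the characteristic function of $\lambda_1^{(1)}$,'' you are more careful to flag the triangular-array issue (since $p$ grows with $n$) and reach for a Lyapunov CLT, which is a reasonable and arguably more honest substitute.

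However, your Lyapunov estimate contains a computational slip that makes the step look more delicate than it is. You write $\mathbb{E}|Y_1 - \mathbb{E}Y_1|^{2+\delta} \asymp p^{2+\delta}$, but that is the size of the \emph{uncentered} moment $\mathbb{E}|Y_1|^{2+\delta}$. The centered variable is
$Y_1 - \mathbb{E}Y_1 = \tfrac{1}{p}\bigl(S_1^2 - \mathbb{E}S_1^2\bigr) = \tfrac{1}{p}\bigl(2p^{3/2}\mu\sigma Z + p\sigma^2(Z^2-1)\bigr)$
with $Z\sim\mathcal N(0,1)$, which has scale $p^{1/2}$, so $\mathbb{E}|Y_1 - \mathbb{E}Y_1|^{2+\delta} \asymp p^{1+\delta/2}$. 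With that correction the Lyapunov ratio becomes
\begin{equation*}
\frac{n\,\mathbb{E}|Y_1-\mathbb{E}Y_1|^{2+\delta}}{\bigl(n\,\mathrm{Var}(Y_1)\bigr)^{1+\delta/2}} \asymp \frac{n\cdot p^{1+\delta/2}}{(np)^{1+\delta/2}} = n^{-\delta/2} \longrightarrow 0
\end{equation*}
for every $\delta>0$, with no constraint on $p/n$ at all. Note also that your uncorrected ratio $p^{1+\delta/2}/n^{\delta/2}$ with $p\asymp n$ actually equals $n^{1} \to \infty$, not $n^{1-\delta/2}$; the subsequent simplification ``$O(n^{1-\delta/2}\cdot n^{\delta/2}/n^{\delta/2})$'' has a sign slip in the exponent, so as written the condition you state ($\delta>2$) does not in fact rescue the (wrong) bound. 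Once the centered moment is used, the CLT step is clean and your overall argument is correct and complete, and in fact simpler than your draft suggests.
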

\begin{proof} Let $X_{\cdot j}=\sum_{i=1}^{p}X_{i j}, (1\leq j\leq n)$. Since $\left\{X_{i j}, 1\leq i\leq p,1\leq j\leq n \right\} $ are i.i.d. real normal random variables, then $\left\lbrace X_{\cdot j},1\leq j\leq n \right\rbrace$ are also i.i.d.. Reviewing that $\lambda_{1}^{(1)}=\dfrac{\sum_{i=1}^{p} W_i}{p}$, we have
	\begin{align*}
		\lambda_{1}^{(1)}=\dfrac{\sum_{j=1}^{n} X_{\cdot j}^2}{np}.
	\end{align*}
Based on the properties of expectation and variance, 
	\begin{align*}
		\mathbb{E}(\lambda_{1}^{(1)})=\dfrac{1}{p}\mathbb{E}( X_{\cdot 1}^2)=\dfrac{1}{p}\sum_{i=1}^{p}\mathbb{E}\left( X_{i 1}^2\right) +\dfrac{1}{p}\sum_{\substack{i,k=1 \\ i \neq k}}^{p} \mathbb{E}\left( X_{i 1}X_{k 1} \right) =p\mu ^2+\sigma^2,
	\end{align*}
and 
	\begin{align*}
		\mathrm{Var}(\lambda_{1}^{(1)})=\dfrac{1}{n^2p^2}\mathrm{Var}\left(\sum_{j=1}^{n} X_{\cdot j}^2\right)=\dfrac{1}{np^2}\mathrm{Var}(X_{\cdot 1}^2)=4\mu^2\sigma ^2\dfrac{p}{n}+O\left(\dfrac{1}{n}\right).
	\end{align*}
When $n>>1$, by calculating the characteristic function of $\lambda_{1}^{(1)}$, we prove that $\lambda_{1}^{(1)}$ asymptotically be a normal distribution with expectation $p\mu ^2+\sigma^2$ and variance $4c\mu^2\sigma^2$.
\end{proof}
\begin{lemma}Under the assumption of Theorem 1.1,
	\begin{equation}
		\mathbb{P} \left(\left|\sum_{i=1}^{n}\Big(W_i-(p\mu^2+\sigma^2)\Big)^2-\mathbb{E} \left( \sum_{i=1}^{n}\Big(W_i-(p\mu^2+\sigma^2)\Big)^2\right)\right|> \dfrac{C_4p^{5/2}x}{n} \right)<\dfrac{1}{x^2},\label{2.2}
	\end{equation}
and
	\begin{equation}
		\mathbb{P} \left(\left|\lambda_{1}^{(2)}-\lambda_{1}^{(1)}-\dfrac{p\sigma^2}{n} \right|>\dfrac{C_5 \sqrt{p}x}{n} \right)<\dfrac{1}{x^2}.\label{2.3}
	\end{equation} 
where $C_4, C_5$ are  constants.     
\end{lemma}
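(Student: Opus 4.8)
The plan is to prove \eqref{2.2} by a second-moment (Chebyshev) estimate and then to deduce \eqref{2.3} from it together with an exact algebraic identity relating $\lambda_1^{(2)}$, $\lambda_1^{(1)}$ and the row sums. Throughout write $m:=p\mu^2+\sigma^2=\mathbb E[W_i]$ and $A:=\sum_{i=1}^{p}(W_i-m)^2$ for the variable appearing in \eqref{2.2} (the index running over the $p$ rows of $W$).

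For \eqref{2.2}, I would first record the decomposition obtained by expanding $X_{ij}X_{\cdot j}=\big[\mu+(X_{ij}-\mu)\big]\big[p\mu+(X_{\cdot j}-p\mu)\big]$ and averaging over $j$,
\begin{equation*}
W_i-m=\mu\,\bar g+p\mu\,\bar G_i+(H_i-\sigma^2),
\end{equation*}
where $\bar g=\tfrac1n\sum_{k,j}(X_{kj}-\mu)$, $\bar G_i=\tfrac1n\sum_{j}(X_{ij}-\mu)$ and $H_i=\tfrac1n\sum_{j}(X_{ij}-\mu)(X_{\cdot j}-p\mu)$. Here $\bar g\sim N(0,p\sigma^2/n)$, the $\bar G_i\sim N(0,\sigma^2/n)$ are mutually independent, $\mathbb E H_i=\sigma^2$, and $\sum_i\bar G_i=\bar g$; squaring and summing over $i$ therefore gives
\begin{equation*}
A=p^2\mu^2\sum_{i=1}^{p}\bar G_i^2+3p\mu^2\bar g^2+\sum_{i=1}^{p}(H_i-\sigma^2)^2+2\mu\bar g\sum_{i=1}^{p}(H_i-\sigma^2)+2p\mu\sum_{i=1}^{p}\bar G_i(H_i-\sigma^2).
\end{equation*}
The first term dominates: as the $\bar G_i$ are i.i.d.\ $N(0,\sigma^2/n)$, $\mathbb E\big[p^2\mu^2\sum_i\bar G_i^2\big]=p^3\mu^2\sigma^2/n$ and $\mathrm{Var}\big(p^2\mu^2\sum_i\bar G_i^2\big)=2p^5\mu^4\sigma^4/n^2$. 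I would then bound the variance of each of the remaining four terms by $O(p^4/n^2)$, using the independence of the $n$ columns of $X$ and elementary Gaussian moment estimates (in particular $\mathrm{Cov}(H_i,H_{i'})=O(1/n)$ for $i\neq i'$, together with the identities $\sum_i(H_i-\sigma^2)=\tfrac1n\sum_j(X_{\cdot j}-p\mu)^2-p\sigma^2$ and $\sum_i\bar G_i(H_i-\sigma^2)=\sigma^2\big(\tfrac pn-1\big)\bar g+(\text{lower order})$). By Cauchy--Schwarz the pairwise covariances of the five summands are then $O(p^{9/2}/n^2)=o(p^5/n^2)$, so $\mathrm{Var}(A)\le C_4^2 p^5/n^2$ for a suitable $C_4$, and Chebyshev's inequality gives $\mathbb P\big(|A-\mathbb E A|>C_4p^{5/2}x/n\big)\le \mathrm{Var}(A)\,n^2/(C_4^2p^5x^2)\le 1/x^2$, i.e.\ \eqref{2.2}. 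The same decomposition also yields $\mathbb E A=p\,\mathrm{Var}(W_1)=\tfrac{p^3\mu^2\sigma^2}{n}+O(p^2/n)$, which I need below.

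For \eqref{2.3}, set $S:=\sum_iW_i=p\lambda_1^{(1)}$ and $B:=p(\lambda_1^{(1)}-m)^2\ge0$. From $\lambda_1^{(2)}=\sum_iW_i^2/S$ and $\lambda_1^{(1)}=S/p$ one has
\begin{equation*}
\lambda_1^{(2)}-\lambda_1^{(1)}=\frac{\sum_iW_i^2-\tfrac1pS^2}{S}=\frac{\sum_i(W_i-\lambda_1^{(1)})^2}{S}=\frac{A-B}{S},
\end{equation*}
using $\sum_i(W_i-\lambda_1^{(1)})^2=\sum_i(W_i-m)^2-p(\lambda_1^{(1)}-m)^2$. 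By Lemma 2.1, $\mathbb E S=p(p\mu^2+\sigma^2)\ge p^2\mu^2$, $\mathrm{Var}(S)=p^2\mathrm{Var}(\lambda_1^{(1)})=O(p^3/n)$ and $\mathbb E B=p\,\mathrm{Var}(\lambda_1^{(1)})=O(p^2/n)$; combined with $\mathbb E A$ above, $\mathbb E A/\mathbb E S=\tfrac{p\sigma^2}{n}+O(1/n)$. Writing
\begin{equation*}
\lambda_1^{(2)}-\lambda_1^{(1)}-\frac{p\sigma^2}{n}=\frac{A-\mathbb E A}{\mathbb E S}+\Big(\frac{\mathbb E A}{\mathbb E S}-\frac{p\sigma^2}{n}\Big)-\frac{B}{\mathbb E S}+\frac{(A-B)(\mathbb E S-S)}{S\,\mathbb E S},
\end{equation*}
the first term is the main one: on the event of probability $\ge1-1/x^2$ from \eqref{2.2} it is bounded by $C_4p^{5/2}x/(n\,\mathbb E S)=O(\sqrt p\,x/n)$. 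The second is $O(1/n)$; the third has $\mathbb E[B/\mathbb E S]=O(1/n)$, so it is $O_{\mathbb P}(1/n)$; and the fourth is $O_{\mathbb P}(p^{1/2}/n^{3/2})$, since $A-B=O_{\mathbb P}(p^3/n)$, $\mathbb E S-S=O_{\mathbb P}(p^{3/2}/\sqrt n)$ (Lemma 2.1) and $S\ge\tfrac12\,\mathbb E S$ with high probability. The last three are all $o(\sqrt p/n)$, so for $n\gg1$ we obtain $\big|\lambda_1^{(2)}-\lambda_1^{(1)}-p\sigma^2/n\big|\le C_5\sqrt p\,x/n$ with probability $>1-1/x^2$, which is \eqref{2.3}.

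The hardest part will be the variance bound for $A$. One must show that, despite the strong dependence among the $W_i$ (they are neither independent nor functions of disjoint randomness, and $W_i-m$ carries a second-order Gaussian-chaos term $H_i-\sigma^2$), the variance of $\sum_i(W_i-m)^2$ has the clean order $p^5/n^2$ — i.e.\ that the $\bar g$- and $H$-contributions and all cross terms are genuinely of lower order. Verifying this requires a careful, if routine, Isserlis-type fourth-moment computation that exploits the independence of the columns of $X$ and the pairwise independence of its centered rows; everything downstream is soft.
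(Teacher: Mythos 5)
Your proof is correct in outline and leads to the same bounds; the main difference from the paper is how you organize the second-moment computation for \eqref{2.2}. The paper rewrites $\sum_i(W_i-l)^2 = \frac{1}{n^2}\sum_i\bigl(\sum_{j,k}\overline{X_{ik}X_{jk}}\bigr)^2$ (with $\overline{X_{ik}X_{jk}}=X_{ik}X_{jk}-\mathbb E X_{ik}X_{jk}$) and then computes $\mathbb E[A^2]$ and $(\mathbb E A)^2$ by a raw Isserlis expansion of fourth moments of these centered products, concluding $\mathrm{Var}(A)=O(p^5/n^2)$ by cancellation of the leading $p^6/n^2$ terms. Your structured decomposition $W_i-m=\mu\bar g+p\mu\bar G_i+(H_i-\sigma^2)$ is more transparent: it isolates the single dominant term $p^2\mu^2\sum_i\bar G_i^2$ (a sum of $p$ i.i.d.\ scaled $\chi^2_1$ variables, giving the $2p^5\mu^4\sigma^4/n^2$ variance and the $p^3\mu^2\sigma^2/n$ mean directly), and you explain why each of the other four summands plus cross terms is $O(p^4/n^2)$ or smaller. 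This buys conceptual clarity over the paper's brute-force fourth-moment bookkeeping, at the cost of still having to verify the lower-order variance bounds — which you correctly flag as the hard part and only sketch. For \eqref{2.3}, your identity $\lambda_1^{(2)}-\lambda_1^{(1)}=(A-B)/S$ with $B=p(\lambda_1^{(1)}-m)^2$ is algebraically identical to the paper's equation \eqref{2.5}, and your four-term Taylor-type expansion around $(\mathbb E A,\mathbb E S)$ plays the same role as the paper's sequence of one-sided bounds derived from \eqref{2.6}--\eqref{2.8}; both arguments share the same soft spot that the $B$-term only satisfies a Markov (order $1/x$) tail bound and is of size $O(x^2/n)$ rather than $O(x/n)$, so absorbing it into $C_5\sqrt p\,x/n$ with a $1/x^2$ probability tacitly restricts the range of $x$ to $x\lesssim \sqrt p$ — a caveat the paper also leaves implicit. (One small remark: the Lemma statement has a typo, summing $i$ from $1$ to $n$, whereas both you and the paper's proof correctly sum over the $p$ rows.)
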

\begin{proof} Let $l:=p\mu^2+\sigma^2$ and $\overline{X_{i k }X_{j k }}=X_{i k }X_{j k }-\mathbb{E}(X_{i k }X_{j k }),(1\leq i,j\leq p,1\leq k\leq n)$. Then $\mathbb{E}(\overline{X_{i k }X_{j k }})=0$. 
Since
	\begin{align*}
		\sum_{i=1}^{p}(W_i-l)^2&=\dfrac{1}{n^2}\sum_{i=1}^{p}\left(
		\sum_{j=1}^{p}\left(\sum_{k=1}^{n}X_{i k }X_{j k }\right)-nl\right)^2=\dfrac{1}{n^2}\sum_{i=1}^{p}\left(\sum_{j=1}^{p}\sum_{k=1}^{n}\overline{X_{i k }X_{j k }}\right)^2,
	\end{align*}
we have
	\begin{align}
		\mathbb{E}\left(\sum_{i=1}^{p}(W_i-l)^2\right)&=\dfrac{1}{n^2}\sum_{k=1}^{n}\sum_{K=1}^{n}\sum_{i=1}^{p}\sum_{j=1}^{p}\sum_{J=1}^{p}\mathbb{E}\left( \overline{X_{i k }X_{j k }}\,\overline{X_{i K }X_{J K }}\right) \nonumber \\
		&=\dfrac{\mu^2\sigma^2p^3}{n}+O\left(\dfrac{p^2}{n}\right),\label{2.4}
	\end{align}
	\begin{align*}
		\mathbb{E}\left(\sum_{i=1}^{p}(W_i-l)^2\right)^2&=\dfrac{1}{n^4}\sum_{\substack{K,Q,\\k,q=1}}^{n}\,\sum_{\substack{I,J,G,\\i,j,g=1}}^{p}\mathbb{E}\left(\overline{X_{i k }X_{j k }}\,\overline{X_{i q }X_{g q }}\,\overline{X_{I K }X_{J K }}\,\overline{X_{I Q }X_{G Q }}\right)\\
		&=\dfrac{\mu^4\sigma^4p^6}{n^2}+O\left(\dfrac{p^5}{n^{2}}\right), 
	\end{align*}
and
	\begin{align*}
		\mathrm{Var}\left(\sum_{i=1}^{p}(W_i-l)^2\right)&=\mathbb{E}\left(\sum_{i=1}^{p}(W_i-l)^2\right)^2-\left(\mathbb{E}\sum_{i=1}^{p}(W_i-l)^2\right)^2=O\left(\dfrac{p^5}{n^{2}}\right).
	\end{align*}
Therefore, (\ref{2.2}) can be derived from Chebyshev's inequality. 
	
For (\ref{2.3}), we use the following indentity:
	\begin{align}
		\lambda_{1}^{(2)}-\lambda_{1}^{(1)}&=\dfrac{\sum_{i=1}^{p} W_i^{2}}{\sum_{i=1}^{p} W_i}- \dfrac{\sum_{i=1}^{p} W_i}{n}\nonumber\\
		&=\dfrac{\sum_{i=1}^{p}(W_i-l)^2}{\sum_{i=1}^{p} W_i}- \dfrac{\left(\left( \sum_{i=1}^{p}W_i/p\right) -l\right)^2}{\sum_{i=1}^{p}W_i/p}.\label{2.5}
	\end{align}
By using (\ref{2.2}) and (\ref{2.4}), it follows that 
	\begin{equation}
		\mathbb{P}\left(\left| \sum_{i=1}^{p}(W_i-l)^2- \dfrac{\mu^2\sigma^2p^3}{n}-O\left(\dfrac{p^2}{n}\right) \right|  <\dfrac{C_4p^{5/2}x}{n}  \right) \geq 1-\dfrac{1}{x^2}.\label{2.6}
	\end{equation}
Furthermore, by using (\ref{2.1}) and applying Chebyshev's inequality, we conclude that
	\begin{equation}
		\mathbb{P} \left(\left|\lambda_{1}^{(1)}- p\mu^2 -\sigma^2\right|<3\mu\sigma x\right)\geq 1-\dfrac{1}{x^2},\label{2.7}
	\end{equation}
and 
	\begin{equation}
		\mathbb{P} \left(\left|\sum_{i=1}^{p} W_i- p^2\mu^2 -p\sigma^2\right|<3p\mu\sigma x\right)\geq 1-\dfrac{1}{x^2}.\label{2.8}
	\end{equation}
In accordance with (\ref{2.6}) and (\ref{2.8}), we derive
	\begin{equation*}
		\mathbb{P} \left(\dfrac{p\sigma^2}{n}-\dfrac{C_4p^{5/2}x/n+3\mu\sigma^3x p^2/n +O\left(p^2/n\right)}{p^2\mu^2+p\sigma ^2+3\mu\sigma p x} < \dfrac{\sum_{i=1}^{p}(W_i-l)^2}{\sum_{i=1}^{p} W_i}\right)\geq 1-\dfrac{2}{x^2},
	\end{equation*}
and 
	\begin{equation*}
		\mathbb{P} \left(\dfrac{\sum_{i=1}^{p}(W_i-l)^2}{\sum_{i=1}^{p} W_i}< 
		\dfrac{p\sigma^2}{n}+\dfrac{C_4p^{5/2}x/n+3\mu\sigma^3x p^2/n +O\left(p^2/n\right)}{p^2\mu^2+p\sigma ^2+3\mu\sigma p x} \right)\geq 1-\dfrac{2}{x^2}.
	\end{equation*}
So, we have
	\begin{equation*}
		\mathbb{P} \left(\left|\dfrac{\sum_{i=1}^{p}(W_i-l)^2}{\sum_{i=1}^{p} W_i}-\dfrac{p\sigma^2}{n}\right|>\dfrac{C_6 \sqrt{p}x}{n} \right)\leq\dfrac{1}{x^2},
	\end{equation*}
where $C_6$ is a constant. In the same way, we notice that the second term on the right of (\ref{2.5}) is $O\left(\dfrac{1}{p}\right)$, that is, there exists constant $C_7$, such that
	\begin{equation*}
		\mathbb{P} \left(\left|\dfrac{\left(\left( \sum_{i=1}^{p}W_i/p\right) -l\right)^2}{\sum_{i=1}^{p}W_i/p}\right|>\dfrac{C_7 x^2}{p} \right)\leq\dfrac{1}{x^2}.
	\end{equation*}
Thus, we have
	\begin{equation*}
		\mathbb{P} \left(\left|\dfrac{\sum_{i=1}^{p}(W_i-l)^2}{\sum_{i=1}^{p} W_i}-\dfrac{\left(\left( \sum_{i=1}^{p}W_i/p\right) -l\right)^2}{\sum_{i=1}^{n}W_i/p}-\dfrac{p\sigma^2}{n}\right|>\dfrac{C_6 \sqrt{p}x}{n}+\dfrac{C_7 x^2}{p} \right)\leq\dfrac{2}{x^2}.
	\end{equation*}
Consequently, we obtain
	\begin{equation*}
		\mathbb{P} \left(\left|\lambda_{1}^{(2)}-\lambda_{1}^{(1)}-\dfrac{p\sigma^2}{n} \right|>\dfrac{C_5 \sqrt{p}x}{n} \right)<\dfrac{1}{x^2}.
	\end{equation*}
where $C_{5}$ is another constant.
\end{proof}
\section{ Proof of Theorem 1.1}

In this section, we derive an upper bound for
$\lambda_{2}(W)$ and prove the main theorem. Firstly, notice that if $\mu>0$, denoting $\overline{X}=(\overline{X}_{i j})_{p\times n}=X-\mathbb{E}X$,$\overline{W}=\dfrac{1}{n}\overline{X} \,\overline{X}^T$, we have
\begin{align*}
	W=\dfrac{1}{n}XX^T=\dfrac{1}{n}(\overline{X}+\mathbb{E}X)(\overline{X}+\mathbb{E}X)^T=\overline{W}+R,
\end{align*}
where 
\begin{equation*}
	R=\dfrac{1}{n}\left(\left( \mathbb{E}X\right) \overline{X}^T+\overline{X}(\mathbb{E}X)^T+\mathbb{E}X(\mathbb{E}X)^T\right).
\end{equation*}
Owing to $\overline{X}_{i j}=X_{i  j}-\mathbb{E}\left(X_{i j}\right) $, we have 
\begin{equation*}
	\mathbb{E} \left( \overline{X}_{i j}\right) =0,\;  \mathrm{Var} \left( \overline{X}_{i j}\right) =\sigma^2,\; \text{and}\; \mathbb{E}\left( \vert\overline{X}_{i j}\vert^4\right)<\infty.
\end{equation*}
From (\ref{1.1}), we  obtain that 
\begin{equation*}
	\lim\limits_{n \to\infty} \lambda_{1}\left( \overline{W} \right) =\sigma^2(1+\sqrt{c})^2. \;\;\; \text{a.s.}
\end{equation*}	
Since $\overline{W}$ and $R$ are symmetric, from the Monotonicity Theorem in \cite{ikebe87}, we have
\begin{align*}
	\lambda_1(\overline{W})\leq\lambda_1(W),
\end{align*}
and from singular inequality 
$s_{2}(X)\leq s_{1}(\overline{X})$,
we get
\begin{align}
	\lambda_2(W)\leq\lambda_1(\overline{W}).\label{3.1}
\end{align}
Thus, we obtain 
\begin{align*}	
	\lambda_2(W)\leq\lambda_1(\overline{W})\leq \lambda_1(W).
\end{align*}
This implies that the second largest eigenvalue $\lambda_2(W)$ is bounded by $\lambda_1(\overline{W})$, while the largest eigenvalue $\lambda_1(W)$ is not less than $\lambda_1(\overline{W})$.	Therefore, it is necessary to study the asymptotic behavior
of $\lambda_1(W)$.
Based on (\ref{3.1}), when $n\to\infty$, we get that the second eigenvalue $\lambda_{2}(W)\leq \sigma^2(1+\sqrt{c})^2$, a.s..
\begin{proof} 
For the sake of convenience, the largest eigenvalue $\lambda_{1}(W)$ be denoted as $\lambda_{1}$ in the following. Let $\mathbf{v}$ be the eigenvector corresponding to $\lambda_{1}$. We  apply the von Mises iteration method to approximate the largest eigenvalue $\lambda_1$. First, we split $\mathbf{1} $ into $\mathbf{v}$ and a component orthogonal to $\mathbf{v}$:
	\begin{equation}
		\mathbf{1}=\mathbf{v}+\mathbf{r},\,\, (\mathbf{v},\mathbf{r})=0,\,\, W\mathbf{v}=\lambda_{1}\mathbf{v}.\label{3.3}
	\end{equation}
So that
	\begin{equation}
		W\mathbf{1}=W\mathbf{v}+W\textbf{r}=\lambda_{1}\mathbf{v}+W\textbf{r}.\label{3.4}
	\end{equation}
Recalling $l=p\mu^2+\sigma^2$, subtract $l\mathbf{1}$ from both sides of (\ref{3.4}),
	\begin{equation}
		W\mathbf{1}-l\mathbf{1}=(\lambda_{1}-l)\mathbf{v}+(W\mathbf{r}-l\mathbf{r}).\label{3.5}
	\end{equation}
Since $\mathbf{r}$ is orthogonal to $\mathbf{v}$, we have
	\begin{equation}
		\sum_{i=1}^{p}(W_i-l)^2=\Vert W\mathbf{1}-l\mathbf{1}\Vert^2=(\lambda_{1}-l)^2\Vert \mathbf{v} \Vert ^2+\Vert W\mathbf{r}-l\mathbf{r}\Vert ^2.\label{3.6}
	\end{equation}
In the following, we use (\ref{2.2}) to obtain a good approximation for $\Vert\mathbf{r}\Vert $. Due to $(\mathbf{v},\mathbf{r})=0,$ and  $W\mathbf{v}=\lambda_{1}\mathbf{v}$, we have $W \mathbf{r}\leq\lambda_{2}(W)\mathbf{r}$. Thus, when $n>>1$, we have a.s.
	\begin{equation}
		\Vert W \mathbf{r}\Vert \leq \lambda_{2}(W)\Vert\mathbf{r}\Vert\leq  2 \sigma^2(1+\sqrt{c})^2\Vert\mathbf{r}\Vert,\label{3.7}
	\end{equation}
and
	\begin{equation*}
		\Vert W\mathbf{r}-l\mathbf{r}\Vert \geq (l- 2 \sigma^2(1+\sqrt{c})^2)\Vert\mathbf{r}\Vert.
	\end{equation*}
Setting $x=\sqrt{p}$ in (\ref{2.2}), we obtain 
	\begin{equation*}
		\Vert\mathbf{r}\Vert ^2 \leq \dfrac{\Vert W\mathbf{1}-l\mathbf{1}\Vert^2}{(l- 2 \sigma^2(1+\sqrt{c})^2)^2}<\dfrac{2C_8p^3/n}{(l- 2 \sigma^2(1+\sqrt{c})^2)^2}<\dfrac{4C_9}{\mu ^4}, \text{\quad with probability at least $ 1-\dfrac{1}{p}$},
	\end{equation*}
where $C_8, C_9$ are constants. That is, we have 
	\begin{equation}
		\mathbb{P}\left(\Vert\mathbf{r}\Vert ^2<\dfrac{4C_9}{\mu ^4} \right)\geq 1-\dfrac{1}{p}.\label{3.8}
	\end{equation}
Now, applying the von Mises iteration method we  estimate $\lambda_1$ with $\lambda_1^{(2)}$ and presenting the error
	\begin{align}
		\lambda_{1}^{(2)}=\dfrac{\Vert W\mathbf{1}\Vert ^2}{(\mathbf{1},W\mathbf{1})} =\dfrac{\Vert \lambda_{1}\mathbf{v}+W\textbf{r} \Vert ^2}{(\mathbf{v}+\mathbf{r},\lambda_{1}\mathbf{v}+W\textbf{r})}=\lambda_{1}+\dfrac{\Vert W\textbf{r}\Vert ^2-\lambda_{1}\mathbf{r}^T W\textbf{r}}{\sum_{i=1}^{p} W_i}.\label{3.9}
	\end{align}	
From (\ref{3.7}) and (\ref{3.8}), we derive 
	\begin{equation}
		\mathbb{P}\left(\Vert W \mathbf{r}\Vert^2 <\dfrac{100C_9\sigma ^4}{\mu^4}\right)\geq 1-\dfrac{1}{p},\label{3.10}
	\end{equation}
and 
	\begin{equation}
		\mathbb{P}\left(| \mathbf{r}^T W \mathbf{r} |  <\dfrac{20C_9\sigma ^2}{\mu^4}\right)\geq 1-\dfrac{2}{p}.\label{3.11}
	\end{equation}
that is, $\mathbf{r}^TW\mathbf{r}$ is bounded in probability. In accordance with (\ref{3.11}) and Chebyshev's inequality, we obtain that there exist constants $C_{10}$ and $C_{11}$, such that
	\begin{align}
		\mathbb{P}\left(| \mathbf{r}^T W \mathbf{r} |>C_{10}+x\right)&\leq \mathbb{P}\left(| \mathbb{E}\left( \mathbf{r}^T W \mathbf{r}\right) |\geq C_{10}\right)+\mathbb{P}\left(|\mathbf{r}^T W \mathbf{r}-\mathbb{E}(\mathbf{r}^T W \mathbf{r})|\geq x\right) \nonumber \\ 
		&\leq\dfrac{C_{11}}{x^2}+\dfrac{2}{p}.\label{3.12}
	\end{align}	
Indeed, $\lambda_{1}$ is always estimated by
	\begin{align*}
		\lambda_{1}\leq\max\limits_{i}\sum_{j=1}^{p}\left|W_{i j}\right| \leq\max\limits_{i}\sum_{j=1}^{p}\sum_{k=1}^{n}\dfrac{1}{n}\left|X_{i k }X_{j k }\right|.
	\end{align*}
Thus, it is easy to show that 
	\begin{equation}
		\mathbb{P}\left(\lambda_{1}<C_{12} p \right)\geq 1-\dfrac{1}{p},\label{3.13}
	\end{equation}
where $C_{12}$ is a constant. By combining (\ref{2.8}) and (\ref{3.10}), there exists a constant $C_{13}$, such that 
	\begin{equation*}
		\mathbb{P}\left(\dfrac{\Vert W \mathbf{r}\Vert^2}{\left\vert\sum_{i=1}^{p} W_i\right\vert} >\dfrac{C_{13} x}{p^2}\right)\leq\dfrac{1}{x^2}+\dfrac{1}{p}.
	\end{equation*}
Similarly, by using (\ref{2.8}) and (\ref{3.12}--\ref{3.13}), we induce that
	\begin{equation*}
		\mathbb{P}\left(\left\vert\dfrac{\lambda_{1}\mathbf{r}^T W\textbf{r}}{\sum_{i=1}^{p} W_i} \right\vert >\dfrac{C_{14} x}{p}\right)\leq\dfrac{C_{15}}{x^2}+\dfrac{3}{p}.
	\end{equation*}
Since 
	\begin{equation*}
		\vert\lambda_{1}^{(2)}-\lambda_{1}\vert=\left\vert \dfrac{\Vert W\textbf{r}\Vert ^2-\lambda_{1}\mathbf{r}^T W\textbf{r}}{\sum_{i=1}^{p} W_i} \right\vert \leq \dfrac{\Vert W \mathbf{r}\Vert^2}{\left\vert\sum_{i=1}^{p} W_i\right\vert} +\left\vert\dfrac{\lambda_{1}\mathbf{r}^T W\textbf{r}}{\sum_{i=1}^{p} W_i} \right\vert,
	\end{equation*}
we  obtain 
	\begin{align*}
		\mathbb{P} \left(|\lambda_{1}^{(2)}-\lambda_{1}|>\dfrac{C_{13} x}{p^2}+\dfrac{C_{14} x}{p} \right)&\leq	\mathbb{P}\left(\dfrac{\Vert W \mathbf{r}\Vert^2}{\left\vert\sum_{i=1}^{p} W_i\right\vert} >\dfrac{C_{13} x}{p^2}\right)+\mathbb{P}\left(\left\vert\dfrac{\lambda_{1}\mathbf{r}^T W\textbf{r}}{\sum_{i=1}^{p} W_i} \right\vert >\dfrac{C_{15} x}{p}\right) \\ &\leq \dfrac{1+C_{15}}{x^2}+\dfrac{3}{p}
		.
	\end{align*}
Therefore, we show that the remaining term on the right of (\ref{3.9}) is of order $O\left(\dfrac{1}{p}\right)$, that is, there exists a constant $C_1$, such that 
	\begin{equation}
		\mathbb{P} \left(|\lambda_{1}^{(2)}-\lambda_{1}|>\dfrac{x}{p} \right)<\dfrac{C_1}{x^2}+O\left(\dfrac{1}{p}\right).\label{3.14}
	\end{equation}
By virtue of 
	\begin{equation*}
		\vert \lambda_{1}^{(1)}-\lambda_{1}+\dfrac{p\sigma^2}{n}\vert \leq \vert \lambda_{1}^{(2)}-\lambda_{1}\vert+\vert\lambda_{1}^{(2)}-\lambda_{1}^{(1)}-\dfrac{p\sigma^2}{n}\vert,
	\end{equation*}
and using (\ref{2.3}), we deduce 
	\begin{align*}
		\mathbb{P} \left(\vert \lambda_{1}^{(1)}-\lambda_{1}+\dfrac{p\sigma^2}{n}\vert>\dfrac{x}{p}+\dfrac{C_5 \sqrt{p}x}{n} \right)&\leq		\mathbb{P} \left(|\lambda_{1}^{(2)}-\lambda_{1}|>\dfrac{x}{p} \right)+	\mathbb{P} \left(\left|\lambda_{1}^{(2)}-\lambda_{1}^{(1)}-\dfrac{p\sigma^2}{n} \right|>\dfrac{C_5 \sqrt{p}x}{n}\right) \\ &< \dfrac{C_1+1}{x^2}+O\left(\dfrac{1}{p}\right).
	\end{align*}
As a result, there exist constants $C_2$ and $C_3$ such that
	\begin{equation*}
		\mathbb{P} \left(\vert \lambda_{1}^{(1)}-\lambda_{1}+\dfrac{p\sigma^2}{n}\vert>\dfrac{C_2 \sqrt{p}x}{n} \right)<\dfrac{C_3}{x^2}+O\left(\dfrac{1}{p}\right).
	\end{equation*}
From Lemma 2.1, we conclude that when $n>>1$, $\lambda_{1}$ converges asymptotically to a normal distribution with expectation $p\mu ^2+(1+c)\sigma^2$ and variance $4c\mu^2\sigma^2$ of order $\sqrt{p}/n$.
\end{proof}

\section*{Acknowledgement}

We would like to express our sincere thanks to Dang-Zheng Liu for his discussion and suggestion. 

\bibliography{reference}
\bibliographystyle{plain}

\end{document}